\date{} 
\begin{document}

\centerline {\Large{\bf A REPRESENTATION OF TWISTED GROUP}} 

\centerline{} 

\centerline{\Large{\bf ALGEBRA OF SYMMETRIC GROUPS ON}} 

\centerline{} 

\centerline{\Large{\bf WEIGHT SUBSPACES OF FREE}} 

\centerline{} 

\centerline{\Large{\bf ASSOCIATIVE COMPLEX ALGEBRA}} 

\centerline{} 

\centerline{\bf {Milena So\v{s}i\'{c}}} 

\centerline{} 

\centerline{Department of Mathematics} 

\centerline{University of Rijeka} 

\centerline{Trg bra\'{c}e Ma\v{z}urani\'{c}a 10, Rijeka 51000, CROATIA} 

\centerline{} 

\centerline{e-mail:  msosic@math.uniri.hr} 

\centerline{} 

\newtheorem{Theorem}{\quad Theorem}[section] 

\newtheorem{Definition}[Theorem]{\quad Definition} 

\newtheorem{Corollary}[Theorem]{\quad Corollary} 

\newtheorem{Proposition}[Theorem]{\quad Proposition} 

\newtheorem{Lemma}[Theorem]{\quad Lemma} 

\newtheorem{Example}[Theorem]{\quad Example} 

\newtheorem{Remark}[Theorem]{\quad Remark} 

\begin{abstract} 
Here we consider two algebras, a free unital associative complex algebra (denoted by ${\mathcal{B}}$) equiped with a multiparametric \textbf{\emph{q}}-differential structure  and a twisted group algebra (denoted by ${\mathcal{A}(S_{n})}$), with the motivation to represent the algebra ${\mathcal{A}(S_{n})}$ on the (generic) weight subpaces of the algebra ${\mathcal{B}}$. One of the fundamental problems in ${\mathcal{B}}$ is to describe the space of all constants (the elements which are annihilated by all multiparametric partial derivatives). To solve this problem, one needs some special matrices and their factorizations in terms of simpler matrices. A simpler approach is to study first certain canonical elements in the twisted group algebra ${\mathcal{A}(S_{n})}$. Then one can use certain natural representation of ${\mathcal{A}(S_{n})}$ on the weight subspaces of ${\mathcal{B}}$, which are the subject of this paper.  
\end{abstract}

\noindent {\bf Keywords:} q-algebras, twisted derivations, symmetric group, polynomial ring, twisted group algebra, representation\\

\noindent {\bf Subject Classification:} 05E10

\section{Introduction}     

First we recall free unital associative complex algebra ${\mathcal{B}}$ generated by $N$ generators ${\{e_{i}\}_{i\in{\mathcal{N}}}}$ (each of degree one, ${N=\textrm{Card}~{\mathcal{N}}}$ )  with a multiparametric \textbf{\emph{q}}$(=\{q_{ij}\})$-differential structure such that \textbf{\emph{q}}-differential operators ${\{\partial_{i}\}_{i\in{\mathcal{N}}}}$ act on ${\mathcal{B}}$ according to the twisted Leibniz rule:\\ 
${\partial_{i}(e_{j}x)}=\delta_{ij}x+q_{ij}e_{j}\partial_{i}(x) \textrm{, for each\, } x\in{\mathcal{B}} \textrm{\, with\, } {\partial_{i}(1)}=0 \textrm{\, and\, } {\partial_{i}(e_{j})}=\delta_{ij}$ (where $q_{ij}$'s are given complex numbers). 
The algebra ${\mathcal{B}}$ is graded by total degree and more generally it could be considered as multigraded, because the operators $\partial_{i}$ (of degree ${-1}$) respect the (direct sum) decomposition of ${\mathcal{B}}$ into multigraded subspaces ${\mathcal{B}}_{Q}$.\\
We also recall a twisted group algebra ${\mathcal{A}(S_{n})}$ of the symmetric groups $S_{n}$ with coefficients in the polynomial ring $R_{n}$ in $n^2$ commuting variables $X_{a\,b}$, $1\le a,b \le n$ (c.f \cite{10} for details). The algebra ${\mathcal{A}(S_{n})}$ is defined as a semidirect product of $R_{n}$ and  the usual group algebra ${\mathbb{C}}[S_n]$ of the symmetric group $S_n$. The elements of ${\mathcal{A}(S_{n})}$ are the linear combinations $\sum_{g_{i}\in S_{n}} p_{i}\,g_{i}$ with $p_{i}\in R_{n}$. The multiplication in ${\mathcal{A}(S_{n})}$ is given by the formula (\ref{mtwga}) in the Section 3, where we impose that $S_{n}$ naturally acts on $R_{n}$.\\ 
In Section 4 we define a natural representation of the twisted group algebra ${\mathcal{A}(S_{n})}$, $n\ge 0$ on the weight subspaces ${\mathcal{B}}_{Q}$ ($\textrm{Card}~Q=n$) of ${\mathcal{B}}$. In this representation some factorizations of certain canonical elements from ${\mathcal{A}(S_{n})}$ will immediately give the corresponding matrix factorizations and also determinant factorizations.

\section{Free unital associative ${\mathbb{C}}$-algebra}

We recall first a free (unital) associative ${\mathbb{C}}$-algebra (see also \cite{9}), denoted by $\mathcal{B} = {\mathbb{C}}\left< e_{i_{1}},\dots, e_{i_{N}} \right>$, where ${\mathcal{N}}=\{i_{1},\dots, i_{N}\}$ is a fixed subset of the set ${{\mathbb{N}}_{0}=\{0,1,\dots \}}$ of nonnegative integers.
The algebra ${\mathcal{B}}$ is generated by $N$ generators ${\{e_{i}\}_{i\in{\mathcal{N}}}}$ (each of degree one), so we can think of ${\mathcal{B}}$ as an algebra of noncommutative polynomials in $N$ noncommuting variables ${e_{i_{1}},\dots, e_{i_{N}}}$.
The algebra ${\mathcal{B}}$ is naturally graded by the total degree $\displaystyle{\mathcal{B}}=\bigoplus_{n \ge 0}{\mathcal{B}}^{n}$, where ${{\mathcal{B}}^{0}={\mathbb{C}}}$ and ${{\mathcal{B}}^{n}}$ consists of all homogeneous polynomials of total degree $n$ in variables ${e_{i_{1}},\dots,e_{i_{N}}}$.\\ 
In view of the fact that every sequence ${l_{1},\dots, l_{n}\in{\mathcal{N}}}$, ${l_{1}\le \dots \le l_{n}}$ can be thought of as a multiset ${Q=\{l_{1}\le \dots \le l_{n}\}}$ over ${\mathcal{N}}$ of size ${n=\textrm{Card}~Q}$ (cardinality of $Q$), we see that each weight subspace ${{\mathcal{B}}_{Q}={\mathcal{B}}_{l_{1}\dots l_{n}}}$, corresponding to a multiset $Q$, is given by 
\begin{equation}\label{BQ}
{\mathcal{B}}_{Q}=\textrm{\emph{span}}_{\mathbb{C}} \left\{e_{j_{1}\dots j_{n}}:=e_{j_{1}}\cdots e_{j_{n}}\mid {j_{1}\dots j_{n}}\in \widehat{Q} \right\},
\end{equation}
where $\widehat{Q}$ denotes the set of all distinct permutations of the multiset $Q$. It follows that ${\dim{\mathcal{B}}_{Q}=\textrm{Card}~\widehat{Q}}$. Thus, we obtain a finer decomposition of ${\mathcal{B}}$ into multigraded components ($=$ weight subspaces):\, $\displaystyle {\mathcal{B}}=\bigoplus_{n \ge 0, \, l_{1}\leq \dots \leq l_{n},\, l_{j}\in{\mathcal{N}}}{\mathcal{B}}_{l_{1}\dots l_{n}}.$ 
Note that if ${l_{1}<\dots <l_{n}}$, then $Q$ is a set and the corresponding weight subspace ${{\mathcal{B}}_{Q}}$ we call \emph{generic}. Other (nongeneric) weight subspaces ${{\mathcal{B}}_{Q}}$ we call \emph{degenerate}.\\ 
We can interpret $\textrm{\textbf{\emph{q}}}$ as a map ${\textrm{\textbf{\emph{q}}}\colon {\mathcal{N}}\times {\mathcal{N}}\to {\mathbb{C}}}$, ${\left( i,j \right)\mapsto q_{ij}}$ for all ${i,j\in{\mathcal{N}}}$. Then, on the algebra ${\mathcal{B}}$, we introduce $N$ linear operators ${\partial_{i}=\partial_{i}^{\textrm{\textbf{\emph{q}}}}\colon {\mathcal{B}} \to {\mathcal{B}}}$, ${i\in{\mathcal{N}}}$, defined recursively, as follows:
$${\partial_{i}(1)}=0,\hspace{10pt} {\partial_{i}(e_{j})}=\delta_{ij},$$
$${\partial_{i}(e_{j}x)}=\delta_{ij}x+q_{ij}e_{j}\partial_{i}(x) \hspace{10pt} \textrm{for each}\hspace{5pt} {x\in{\mathcal{B}}},\hspace{3pt} i,j\in{\mathcal{N}}.$$ 
($\delta_{ij}$ is a standard Kronecker delta i.e $\delta_{ij}=1$ if $i=j$, and $0$ otherwise.)\\
These \textbf{\emph{q}}-differential operators ${\{\partial_{i}\}_{i\in{\mathcal{N}}}}$ act as a generalized \emph{i}-th partial derivatives on the algebra ${\mathcal{B}}$; they depend on additional parameters (complex numbers) $q_{ij}$. Therefore, we can say that $\partial_{i}^{\textrm{\textbf{\emph{q}}}}$ is a multiparametrically deformed \emph{i}-th partial derivative or shortly \textbf{\emph{q}}-deformed \emph{i}-th partial derivative. It is easy to see that if all $q_{ij}$'\emph{s} are equal to one, then $\partial_{i}^{\textrm{\textbf{\emph{q}}}}$ coincides with a usual \emph{i}-th partial derivative $\partial_{i}$.\\
\noindent In what follows we will consider ${\mathcal{B}}$  with this `\textbf{\emph{q}}-differential structure'.\\

Let us denote by ${{\mathfrak{B}_{Q}}=\left\{ e_{\underline{j}}\mid {\underline{j}}\in \widehat{Q} \right\}}$ the monomial basis of ${{\mathcal{B}}_{Q}}$, where $\underline{j}:=j_{1}\dots j_{n}$. Then the action of ${\partial_{i}}=\partial_{i}^{\textrm{\textbf{\emph{q}}}}$ on a typical monomial ${e_{\underline{j}}\in {\mathfrak{B}_{Q}}}$ is given explicitly by the formula:
\begin{equation}\label{pardeg}
{\partial_{i}}(e_{\underline{j}})=\sum_{1\le k\le n, \, j_k=i} q_{ij_{1}}\cdots q_{ij_{k-1}}\,e_{j_{1}\dots \widehat{j_{k}}\dots j_{n}},
\end{equation}
where $\widehat{j_{k}}$ denotes the omission of the corresponding index ${j_{k}}$.\\
The number of terms in this sum is equal to the number of appearances (multiplicity) of the generator $e_{i}$ in the monomial $e_{\underline{j}}$.\\ 

\noindent In the generic case, when $Q$ is a set, the formula (\ref{pardeg}) is reduced to:
\begin{equation}\label{partgen}
 {\partial_{i}}(e_{\underline{j}})=q_{ij_{1}}\cdots q_{ij_{k-1}}\,e_{j_{1}\dots \widehat{j_{k}}\dots j_{n}}.
\end{equation}
If ${j_{k}=i}$ for all ${1\leq k\leq n}$, then from (\ref{pardeg}) we get the following important special case
\begin{equation}\label{paren}
{\partial_{i}(e_{i}^{n})}=\left( 1+q_{ii}+q_{ii}^2+\cdots +q_{ii}^{n-1}\right)\,e_{i}^{n-1}=\left[ n \right]_{q_{ii}}\,e_{i}^{n-1},
\end{equation}
where ${\left[ n \right]_{q}=1+q+\cdots +q^{n-1}}$ is a \emph{q}-analogue of a natural number $n$. For $q_{ii}=1$ the formula (\ref{paren}) can be read
as the classical formula \, ${{\partial_{i}(e_{i}^{n})}=n\cdot e_{i}^{n-1}}$.\\    

\noindent For ${x\in {\mathcal{B}}_{l_{1}\dots l_{n}}}$ and ${y\in {\mathcal{B}}}$ we have a more general formula: 
$${\partial_{i}}(xy)={\partial_{i}}(x)\,y + q_{il_{1}}\cdots q_{il_{n}}\,x\,{\partial_{i}}(y)\hspace{10pt}\textrm{for each}\hspace{5pt} {i\in{\mathcal{N}}}.$$
On the other hand, with the motivation of treating better the matrices of $\partial_{i}\vert_{{\mathcal{B}}_{Q}}$, we introduce a multidegree operator ${\partial \colon {\mathcal{B}} \to {\mathcal{B}}}$ with\, $\displaystyle{\partial=\sum_{i\in{\mathcal{N}}} {e_{i\, }\partial_{i}}}$,\, where ${e_{i}\colon {\mathcal{B}} \to {\mathcal{B}}}$ are considered as (multiplication by ${e_{i}}$) operators on ${\mathcal{B}}$. 
The operator ${\partial}$ preserves the direct sum decomposition of the algebra ${\mathcal{B}}$, i.e each subspace ${\mathcal{B}}_{Q}$ is an invariant subspace of ${\partial}$. Moreover, we denote by ${\partial^Q \colon {\mathcal{B}}_{Q} \to {\mathcal{B}}_{Q}}$ the restriction of ${\partial \colon {\mathcal{B}} \to {\mathcal{B}}}$ to the subspace ${\mathcal{B}}_{Q}$ (i.e\, $\displaystyle\partial^Q {x}=\partial {x}$ \, for every\, $x \in {\mathcal{B}}_{Q}$). Hence for each ${j_{1}\dots j_{n}}\in \widehat{Q}$ we get
\begin{align*}
{\partial^Q}&\left ( e_{j_{1}\dots j_{n}}\right )=\sum_{i\in{\mathcal{N}}} {e_{i}\partial_{i}} \left ( e_{j_{1}\dots j_{n}}\right )
=\sum_{i\in{\mathcal{N}}} e_{i} \sum_{1\le k\le n, \, j_{k}=i} q_{ij_{1}}\cdots q_{ij_{k-1}}\,e_{j_{1}\dots \widehat{j_{k}}\dots j_{n}}\\
&=\sum_{1\le k\le n} \, \sum_{i\in{\mathcal{N}}, \, i=j_{k}} q_{ij_{1}}\cdots q_{ij_{k-1}}\,e_{ij_{1}\dots \widehat{j_{k}}\dots j_{n}} = \sum_{1\le k\le n} q_{j_{k}j_{1}}\cdots q_{j_{k}j_{k-1}}\,e_{j_{k}j_{1}\dots \widehat{j_{k}}\dots j_{n}}.
\end{align*}
If ${\textrm{B}_Q}$ denotes the matrix of ${\partial^Q}$ w.r.t basis ${\mathfrak{B}_{Q}}$ (totally ordered by the Johnson-Trotter ordering on permutations c.f~\cite{11}) of ${{\mathcal{B}}_{Q}}$, then we can write
\begin{equation}\label{partQ}
{\textrm{B}_Q}\,e_{j_{1}\dots j_{n}}=\sum_{1\le k\le n} q_{j_{k}j_{1}}\cdots q_{j_{k}j_{k-1}}\,e_{j_{k}j_{1}\dots \widehat{j_{k}}\dots j_{n}}.
\end{equation}
Note that for any multiset ${Q=\left\{ k_{1}^{n_{1}},\dots, k_{p}^{n_{p}} \right\}}$ ($k_{i}$ distinct) of cardinality $n$ (where $n=n_{1}+\cdots +n_{p}$)\, the size of the matrix ${\textrm{B}_Q}$ is equal to the following multinomial coefficient 
$${\frac{n!}{n_{1}! \cdots n_{p}!}=\binom{n}{n_{1}, \dots , n_{p}} \, (=\dim{\mathcal{B}}_{Q})}.$$
The entries of ${\textrm{B}_Q}$ are polynomials in $q_{ij}$'\emph{s}, hence its determinant is also a polynomial in $q_{ij}$'\emph{s}. Clearly, in the generic case (i.e $Q$ is a set) the entries of ${\textrm{B}_Q}$ are monomials in $q_{ij}$'\emph{s} and the size of ${\textrm{B}_Q}$ is equal to $n!$.\\
In the algebra ${\mathcal{B}}$ of particular interest are the elements called constants. They are by definition annihilated by all multiparametric partial derivatives. In other words, an element $C\in {\mathcal{B}}$ is called a constant in ${\mathcal{B}}$ if ${\partial_{i}(C)=0}$\, for every ${i\in{\mathcal{N}}}$. It is obvious that \, $\displaystyle{\partial{C}=\sum_{i\in{\mathcal{N}}} {e_{i\, }}\partial_{i\, }C=0}$\, iff\, ${\partial_{i\, }C=0}$\, for all $i\in{\mathcal{N}}$.\\

\noindent We denote by ${\mathcal{C}}$ the space of all constants in ${\mathcal{B}}$ and similarly by ${{\mathcal{C}}_{Q}}$ the space of all constants belonging to ${{\mathcal{B}}_{Q}}$.
Then ${{\mathcal{C}}=\ker{\partial}}$ (where ${\ker{\partial}}$ denotes the kernel of the multidegree operator ${\partial}$). By using the fact that the operator ${\partial}$ preserves the direct sum decomposition of the algebra ${\mathcal{B}}$, we have that the space ${{\mathcal{C}}}$ inherits the direct sum decomposition into multigraded subspaces ${{\mathcal{C}}_{Q}}$. Hence the fundamental problem of determining the space ${\mathcal{C}}$ can be reduced to determining all finite dimensional spaces ${{\mathcal{C}}_{Q}}$ ($={\ker{\partial^Q}}$) for all multisets $Q$ over ${\mathcal{N}}$. Thus of particular interest is the study of $\det {\textrm{B}_Q}$. The special role play the actual values of the parameters $q_{ij}$'\emph{s} $($called singular values or singular parameters$)$ for which ${\det{\textrm{B}_{Q}}}$ vanishes. In Section 4 a formula for the factorization of the matrix ${\textrm{B}_Q}$ and its determinant will be given.

\section{A twisted group algebra of the symmetric group}

\noindent Here, we firstly recall some basic factorizations in the twisted group algebra $ {\mathcal{A}(S_{n})}$ of the symmetric group
$S_{n}$ with coefficients in a polynomial algebra in commuting variables $X_{a\,b}$ ($1\le a,b\le n$).\\ 
Secondly, (in the next section) we will consider natural representation of the algebra $ {\mathcal{A}(S_{n})}$ on the weight subspaces ${{\mathcal{B}}_{Q}}$ of ${\mathcal{B}}$.\\

\noindent In \cite{10} we have introduced a twisted group algebra ${\mathcal{A}(S_{n})}$ of the symmetric group $S_{n}$, which we recall now.\\ 
The elements of the twisted group algebra ${\mathcal{A}(S_{n})}=R_{n}\rtimes {\mathbb{C}}[S_n]$ are the linear combinations $\displaystyle\sum_{g_{i}\in S_{n}} p_{i}\,g_{i}$, where the coefficients are polynomials $p_{i} = p_{i}(\dots, X_{a\,b},\dots)$ in commuting variables $X_{a\,b}$, $1\le a,b\le n$. The multiplication in ${\mathcal{A}(S_{n})}$ is given by the formula 
\begin{equation}\label{mtwga} 
        (p_{1}g_{1}) \cdot ( p_{2}g_{2})=(p_{1}\cdot (g_{1}{\bf.}p_{2}))\, g_{1}g_{2},
\end{equation}  
where \, $g_{1}{\bf.}p_{2}=g_{1}{\bf.}p_{2}(\dots, X_{a\,b},\dots)=p_{2}(\dots, X_{g_{1}(a)\,g_{1}(b)},\dots)\, g_{1}$.\\
The algebra ${\mathcal{A}(S_{n})}$ is associative but not commutative (i.e\, $g{\bf.}p \neq p\, g$).\\
In the algebra ${\mathcal{A}(S_{n})}$ we have introduced more specific elements, denoted by ${g^*}$ and defined by 
\begin{equation}\label{gtilda} 
         g^*:=\left(\prod_{(a,b)\in I(g^{-1})} X_{a\,b} \right) g
\end{equation}
for every $g\in S_{n}$, where ${I(g)=\{(a,b)\mid 1\le a < b\le n, \, g(a) > g(b) \}}$ denotes the set of inversions of the permutation $g$.\\ 
Of particular interest are the elements $t_{b,a}^*\in {\mathcal{A}(S_{n})}$, $1\le a\le b\le n$, where for $a<b$, $t_{b,a}\in S_{n}$ denotes the inverse of the cyclic permutation $t_{a,b}\in S_{n}$ i.e
\begin{equation*} 
t_{b,a}=\left( \begin{array}{ccccccccccc}
                 1 & \cdots & a-1 & a & a+1 & \cdots & b-1 & b & b+1 & \cdots & n \\
                 1 & \cdots & a-1 & a+1 & a+2 & \cdots & b & a & b+1 & \cdots & n \\
              \end{array}\right)
\end{equation*}
(see also \cite{3}).
Every permutation $g\in S_{n}$ can be decomposed into cycles (from the left) as follows:
\begin{equation}\label{gdecomp} 
         g =t_{k_{n},n}\cdot t_{k_{n-1},n-1}\cdots t_{k_{j},j}\cdots t_{k_{2},2}\cdot t_{k_{1},1}\, \left(= \prod_{1\le j\le n}^{\gets} t_{k_{j},j} \right). 
\end{equation}
By applying (\ref{mtwga}) we obtain the following formula
\begin{equation}\label{pg1g2} 
g_{1}^*\cdot g_{2}^*=X(g_{1}, g_{2})\, (g_{1}g_{2})^*,
\end{equation}
where the multiplication factor $X(g_{1}, g_{2})$ takes care of the reduced number of inversions in the group product of $g_{1}, g_{2} \in S_{n}$ and it is given by
 $$X(g_{1}, g_{2})=\prod_{(a,b)\in I(g_{1}^{-1})\backslash I((g_{1}g_{2})^{-1})} X_{\{a,\,b\}} \left(=\prod_{(a,b)\in I(g_{1})\cap I(g_{2}^{-1})} X_{\{g_{1}(a),\,g_{1}(b)\}} \right).$$
By studying in details the elements $g^*\in {\mathcal{A}(S_{n})}$ and more specific properties arising from (\ref{pg1g2}) and (\ref{gdecomp}) (see also \cite{3} and \cite{4}) we came to the following conclusions. \footnote{For more details c.f the  Ph.D. thesis \cite{7}.}\\
 
\noindent Let \, $\displaystyle{\alpha^*_{n}} := \sum_{g\in S_{n}} g^*$,\, $n\ge 1$.\\
\begin{enumerate} 
\item If we define simpler elements ${\beta^*_{k}}\in {\mathcal{A}(S_{n})}$ ($1\le k\le n$) as follows
\begin{align*} 
{\beta^*_{n}}&= t_{n,1}^* + t_{n-1,1}^* + \cdots + t_{2,1}^* + t_{1,1}^* ,\\
{\beta^*_{n-1}}&= t_{n,2}^* + t_{n-1,2}^* + \cdots + t_{3,2}^* + t_{2,2}^*,\\
\vdots\\
{\beta^*_{n-k+1}}&= t_{n,k}^* + t_{n-1,k}^* + \cdots + t_{k+1,k}^* + t_{k,k}^*,\\
\vdots\\
{\beta^*_{2}}&= t_{n,n-1}^* + t_{n-1,n-1}^*,\\
{\beta^*_{1}}&= t_{n,n}^*\, ( = id)
\end{align*}
then the element ${\alpha^*_{n}}\in {\mathcal{A}(S_{n})}$ can be decomposed into the product of elements ${\beta^*_{k}}$ ($1\le k\le n$) i.e we have
\begin{equation}\label{alphafac}
{\alpha^*_{n}} = {\beta^*_{1}} \cdot {\beta^*_{2}} \cdots {\beta^*_{n}} = \prod_{1\le k\le n} {\beta^*_{k}}.
\end{equation}
\item Now we define yet simpler elements in ${\mathcal{A}(S_{n})}$ for all $1\le k\le n-1$
\begin{align*}
{\gamma^*_{n-k+1}}&=\left( id-t_{n,k}^*\right)\cdot \left( id-t_{n-1,k}^*\right)\cdots
\left( id-t_{k+1,k}^*\right),\\
{\delta^*_{n-k+1}}&=\left( id-(t_{k}^*)^{2} \, t_{n,k+1}^*\right)\cdot \left( id-(t_{k}^*)^{2} \, t_{n-1,k+1}^*\right)\cdots
\left( id-(t_{k}^*)^{2} \, t_{k+1,k+1}^*\right),
\end{align*}
where\, $t_{k+1,k+1}^* = id$ and
\begin{equation}\label{sqta}
(t_{k}^*)^{2} = X_{\{k,\,k+1\}}\, id
\end{equation}
because 
$$(t_{k}^*)^{2} = (X_{k\,k+1}\, t_{k})\cdot (X_{k\,k+1}\, t_{k}) = X_{k\,k+1}\cdot X_{k+1\,k}\, (t_{k})^{2} = X_{\{k,\,k+1\}}\, id$$ 
with $t_k = t_{k+1,k}\,(=t_{k,k+1}).$\\

\noindent Then each element ${\beta^*_{k}}$, $2\le k\le n$ can be further factored as
\begin{equation}\label{betafac}
{\beta^*_{k}} = {\delta^*_{k}} \cdot \left( {\gamma^*_{k}}\right)^{-1}.
\end{equation}
\end{enumerate}

\section{Representation of the twisted group algebra ${\mathcal{A}(S_{n})}$ on the subspaces ${\mathcal{B}_{Q}}$ of ${\mathcal{B}}$}

\noindent Recall that ${\mathcal{B}}_{Q}=\textrm{\emph{span}}_{\mathbb{C}} \left\{e_{j_{1}\dots j_{n}}=e_{j_{1}}\cdots e_{j_{n}}\mid {j_{1}\dots j_{n}}\in \widehat{Q} \right\}$ denotes the weight subspace of the free associative complex algebra ${\mathcal{B}}$, where $\widehat{Q}$ is the set of all distinct permutations of the multiset $Q$ (see Section 2).\\ 

\noindent Let $V$ be a vector space over a field $F$. Then $\textrm{End}(V)$ denotes the algebra of all endomorphisms of $V$. If we denote by $A$ an associative algebra then by definition a representation of $A$ on $V$ is any algebra homomorphism $\varphi \colon A \to \textrm{End}(V)$.\\

\noindent Our next task is to define a representation (see the formula (\ref{ro}) below) $\varrho \colon {\mathcal{A}(S_{n})} \to \textrm{End}({\mathcal{B}}_{Q})$,\, where ${\mathcal{B}}_{Q}$ is defined by (\ref{BQ}).\\

\noindent We first recall that ${R_n={\mathbb{C}}[X_{a\,b} \mid 1\le a,b\le n ]}$ denotes the polynomial ring with unit element ${1\in R_n}$ and ${{\mathbb{C}}[S_n]=\left\{\sum_{\sigma\in S_{n}} c_{\sigma} \sigma \mid c_{\sigma}\in {\mathbb{C}}\right\}}$ denotes the usual group algebra. In ${\mathbb{C}}[S_n]$ the multiplication is given by
$$\left( \sum_{\sigma\in S_{n}} c_{\sigma} \sigma \right)\cdot \left( \sum_{\tau\in S_{n}} d_{\tau} \tau \right)= \sum_{\sigma, \tau\in S_{n}} (c_{\sigma}d_{\tau})\, \sigma \tau.$$
Recall that ${\mathcal{A}(S_{n})}=R_{n}\rtimes {\mathbb{C}}[S_n]$, so we first consider the representations $\varrho_1$ of $R_n$ and $\varrho_2$ of ${\mathbb{C}}[S_n]$:
$$\varrho_1 \colon R_{n} \to \textrm{End}({\mathcal{B}}_{Q}),$$
$$\varrho_2 \colon {\mathbb{C}}[S_n] \to \textrm{End}({\mathcal{B}}_{Q})$$
given by Definition~\ref{ro1} and Definition~\ref{ro2} respectively.\\

\noindent Let $Q_{a\,b}$, $1\le a,b \le n$ denote a diagonal operator on ${\mathcal{B}_{Q}}$ (see (\ref{BQ})) defined by
\begin{equation}\label{Qab} 
         Q_{a\,b}\,e_{j_{1}\dots j_{n}}:= q_{j_{a}j_{b}}\,e_{j_{1}\dots j_{n}}.
\end{equation}
Note that
$$Q_{a\,b}\cdot Q_{c\,d} = Q_{c\,d}\cdot Q_{a\,b}.$$

\begin{Definition}\label{ro1}
We define a representation $\varrho_1 \colon R_{n} \to \textnormal{End}({\mathcal{B}}_{Q})$ on the generators $X_{a\,b}$ by the formula
$$\varrho_1(X_{a\,b}):=Q_{a\,b} \hspace{25pt} 1\le a,b \le n.$$
\end{Definition}
\noindent In other words, considering (\ref{Qab}) we get
$$\varrho_1(X_{a\,b})\,e_{j_{1}\dots j_{n}} = q_{j_{a}j_{b}}\,e_{j_{1}\dots j_{n}}.$$

\begin{Definition}\label{ro2}
We define a linear operator $\varrho_2 \colon {\mathbb{C}}[S_n] \to \textnormal{End}({\mathcal{B}}_{Q})$ by
$$\varrho_2(g)\,e_{j_{1}\dots j_{n}}:=e_{j_{g^{-1}(1)}\dots j_{g^{-1}(n)}}$$
for every $g\in S_{n}$.
\end{Definition}

\begin{Proposition}\label{homro2}
A map $\varrho_2 \colon {\mathbb{C}}[S_n] \to \textnormal{End}({\mathcal{B}}_{Q})$ is a representation.
\end{Proposition}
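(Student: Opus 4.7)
The plan is to verify the three conditions for $\varrho_2$ to be an algebra homomorphism: $\mathbb{C}$-linearity, preservation of the identity, and multiplicativity. Since $\varrho_2$ is defined on the group basis $\{g \mid g \in S_n\}$ and $\mathbb{C}[S_n]$ is free as a $\mathbb{C}$-module on this basis, linearity is automatic once we extend the definition by $\varrho_2\bigl(\sum_g c_g g\bigr) = \sum_g c_g \varrho_2(g)$. Preservation of the identity is immediate: $\varrho_2(\mathrm{id})e_{j_1 \dots j_n} = e_{j_{\mathrm{id}^{-1}(1)}\dots j_{\mathrm{id}^{-1}(n)}} = e_{j_1 \dots j_n}$.

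The core of the proof is therefore the multiplicativity relation $\varrho_2(gh) = \varrho_2(g)\varrho_2(h)$ for all $g, h \in S_n$, which, by bilinearity of multiplication in $\mathbb{C}[S_n]$ and in $\textrm{End}(\mathcal{B}_Q)$, suffices to establish multiplicativity on all of $\mathbb{C}[S_n]$. To check this relation it is enough to compare both sides on the monomial basis $\mathfrak{B}_Q$. I would compute $\varrho_2(h)e_{j_1 \dots j_n} = e_{k_1 \dots k_n}$ with $k_i := j_{h^{-1}(i)}$, and then apply $\varrho_2(g)$ to get $\varrho_2(g)\varrho_2(h)e_{j_1\dots j_n} = e_{k_{g^{-1}(1)} \dots k_{g^{-1}(n)}}$. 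Substituting $k_{g^{-1}(i)} = j_{h^{-1}(g^{-1}(i))} = j_{(gh)^{-1}(i)}$, the right-hand side equals $e_{j_{(gh)^{-1}(1)} \dots j_{(gh)^{-1}(n)}} = \varrho_2(gh)e_{j_1 \dots j_n}$, as required.

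There is no real obstacle here; the only subtle point is the order-of-composition bookkeeping, namely the fact that inversion turns the covariant composition $gh$ into the contravariant composition $h^{-1}g^{-1}$, which is exactly why the convention $\varrho_2(g)e_{j_1 \dots j_n} := e_{j_{g^{-1}(1)}\dots j_{g^{-1}(n)}}$ (with the inverse) is chosen: it makes $\varrho_2$ a left action rather than an anti-action. I would emphasize this step to make the computation transparent. Finally, one notes that each $\varrho_2(g)$ is a linear bijection of $\mathcal{B}_Q$ since it merely permutes the basis $\mathfrak{B}_Q$ (permutations of a multiset are sent to permutations of the same multiset), so $\varrho_2(g) \in \textrm{End}(\mathcal{B}_Q)$ is well defined, completing the verification that $\varrho_2$ is a representation.
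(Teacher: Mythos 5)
Your proof is correct and is essentially the standard verification underlying the paper's one-line proof, which simply identifies $\varrho_2$ as the (right) regular representation: the key computation $k_{g^{-1}(i)} = j_{h^{-1}(g^{-1}(i))} = j_{(gh)^{-1}(i)}$ is exactly the order-of-composition bookkeeping that makes the place-permutation action $g\mapsto\varrho_2(g)$ a genuine homomorphism rather than an anti-homomorphism. You supply explicitly what the paper leaves implicit, and your remark that $\varrho_2(g)$ merely permutes the monomial basis $\mathfrak{B}_Q$ (so it is well defined on $\mathcal{B}_Q$) is the correct justification.
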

\begin{proof} 
In fact $\varrho_2$ is a (right) regular representation.
\end{proof}

Let $\varrho \colon {\mathcal{A}(S_{n})} \to \textrm{End}({\mathcal{B}}_{Q})$ be a map defined on decomposable elements by
\begin{equation}\label{ro} 
\varrho(pg):=\varrho_1(p)\cdot \varrho_2(g)
\end{equation}
for every $p\in R_{n}$ and $g\in S_{n}$ and extended by additivity. In the trivial cases we have
\begin{itemize}
\item [$(i)$] \quad $\varrho(1\cdot g)\,e_{j_{1}\dots j_{n}} = \varrho_1(1)\cdot \varrho_2(g)\,e_{j_{1}\dots j_{n}} = 1\cdot e_{j_{g^{-1}(1)}\dots j_{g^{-1}(n)}} = e_{j_{g^{-1}(1)}\dots j_{g^{-1}(n)}}$,
\item [$(ii)$] \quad $\varrho(X_{a\,b}\,e)\,e_{j_{1}\dots j_{n}} = \varrho_1(X_{a\,b})\cdot \varrho_2(e)\,e_{j_{1}\dots j_{n}} =  Q_{a\,b}\,e_{j_{1}\dots j_{n}} = q_{j_{a}j_{b}}\,e_{j_{1}\dots j_{n}}$.
\end{itemize}

\begin{Remark}\label{tmul}
By applying the formula $(\ref{ro})$ on the general elements of the twisted group algebra we get$:$
\begin{align*}
\varrho\left(\sum_{g_{i}\in S_{n}} p_{i}\,g_{i}\right)\,e_{j_{1}\dots j_{n}} &= \sum_{g_{i}\in S_{n}} \varrho(p_{i}\,g_{i})\,e_{j_{1}\dots j_{n}}\\ 
&= \sum_{g_{i}\in S_{n}} \varrho_1\left( p_{i}(\dots, X_{a\,b},\dots)\right)\cdot \left(\varrho_2(g_{i})\,e_{j_{1}\dots j_{n}}\right)\\
&= \sum_{g_{i}\in S_{n}} p_{i} \left( \dots, q_{j_{g_{i}^{-1}(a)}j_{g_{i}^{-1}(b)}},\dots \right)e_{j_{g_{i}^{-1}(1)}\dots j_{g_{i}^{-1}(n)}}.
\end{align*}
Note that the basic instance of the multiplication $(\ref{mtwga})$ in ${\mathcal{A}(S_{n})}$ is given by the following formula
\begin{equation}\label{mulgen}
\left( X_{a\,b}\,g_{1} \right) \cdot \left( X_{c\,d}\,g_{2} \right) = \left( X_{a\,b}\cdot X_{g_{1}(c)\,g_{1}(d)} \right)\, g_{1}g_{2}
\end{equation}
which are the consequences of the following two types of basic relations:
\begin{equation}\label{rel1}
X_{a\,b}\cdot X_{c\,d} = X_{c\,d}\cdot X_{a\,b},
\end{equation}
\begin{equation}\label{rel2}
        g{\bf.}X_{a\,b} = X_{g(a)\,g(b)}\, g.
\end{equation}
\end{Remark}
\begin{Proposition}\label{alhom}
A map $\varrho \colon {\mathcal{A}(S_{n})} \to \textnormal{End}({\mathcal{B}}_{Q})$ is a representation.
\end{Proposition}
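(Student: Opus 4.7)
The plan is to verify multiplicativity of $\varrho$ on decomposable elements $pg$, since additivity is already built into the definition; the whole argument then reduces to matching the two defining relations (\ref{rel1}) and (\ref{rel2}) of $\mathcal{A}(S_{n})$ with operator identities on $\mathcal{B}_{Q}$.

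First I would extend Definition~\ref{ro1} from generators to all of $R_{n}$: because the diagonal operators $Q_{a\,b}$ pairwise commute (as noted right after (\ref{Qab})), the universal property of the commutative polynomial ring $R_{n}$ yields a well-defined algebra homomorphism $\varrho_{1}\colon R_{n}\to \textrm{End}(\mathcal{B}_{Q})$, so relation (\ref{rel1}) is automatically respected. Proposition~\ref{homro2} already handles $\varrho_{2}$.

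The crux is a single commutation identity that mirrors (\ref{rel2}):
$$\varrho_{2}(g)\cdot\varrho_{1}(X_{a\,b}) \;=\; \varrho_{1}(X_{g(a)\,g(b)})\cdot\varrho_{2}(g),\qquad g\in S_{n},\ 1\le a,b\le n.$$
I would verify this by evaluating both sides on a basis vector $e_{j_{1}\dots j_{n}}$. The left-hand side gives $q_{j_{a}j_{b}}\,e_{j_{g^{-1}(1)}\dots j_{g^{-1}(n)}}$. Setting $k_{l}:=j_{g^{-1}(l)}$ for each $l$, the right-hand side evaluates to $q_{k_{g(a)}k_{g(b)}}\,e_{k_{1}\dots k_{n}}$; since $k_{g(a)}=j_{g^{-1}(g(a))}=j_{a}$ and analogously $k_{g(b)}=j_{b}$, the two sides agree. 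Multiplicativity of $\varrho_{1}$ then upgrades this identity to $\varrho_{2}(g)\cdot\varrho_{1}(p)=\varrho_{1}(g{\bf.}p)\cdot\varrho_{2}(g)$ for every $p\in R_{n}$.

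With that in hand, multiplicativity of $\varrho$ is immediate. For decomposable elements $p_{1}g_{1}, p_{2}g_{2}\in \mathcal{A}(S_{n})$,
$$\varrho(p_{1}g_{1})\cdot\varrho(p_{2}g_{2}) \;=\; \varrho_{1}(p_{1})\,\varrho_{2}(g_{1})\,\varrho_{1}(p_{2})\,\varrho_{2}(g_{2}) \;=\; \varrho_{1}\!\left(p_{1}\cdot(g_{1}{\bf.}p_{2})\right)\varrho_{2}(g_{1}g_{2}),$$
which by (\ref{mtwga}) equals $\varrho\bigl((p_{1}g_{1})\cdot(p_{2}g_{2})\bigr)$; linear extension closes the proof. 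The only real obstacle is the bookkeeping in the commutation identity---keeping $g$ and $g^{-1}$ straight when reindexing the basis vector---and once that is handled the whole argument is a direct translation of the semidirect-product relations of $\mathcal{A}(S_{n})$ into operator equalities on $\mathcal{B}_{Q}$.
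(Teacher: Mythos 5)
Your proposal is correct and follows essentially the same route as the paper: the paper's proof also reduces everything to checking that $\varrho$ respects the relations (\ref{rel1}) and (\ref{rel2}), and its computation of the two sides $L$ and $D$ on a basis vector $e_{j_{1}\dots j_{n}}$ is exactly your commutation identity $\varrho_{2}(g)\cdot\varrho_{1}(X_{a\,b})=\varrho_{1}(X_{g(a)\,g(b)})\cdot\varrho_{2}(g)$ with the same reindexing $k_{l}=j_{g^{-1}(l)}$. You merely make explicit two steps the paper leaves implicit (the universal-property extension of $\varrho_{1}$ to all of $R_{n}$ and the final multiplicativity check against (\ref{mtwga})), which is a welcome tightening rather than a different argument.
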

\begin{proof}
It is enough to check that $\varrho$ preserves the basic relations (\ref{rel1}) and (\ref{rel2}), where we will apply the formula (\ref{ro}) and Definitions~\ref{ro1} and \ref{ro2}.\\ 
It is easy to see that:
\begin{itemize}
\item [$(i)$] \quad $\varrho(X_{a\,b}\cdot X_{c\,d}) = Q_{a\,b}\cdot Q_{c\,d} = Q_{c\,d}\cdot Q_{a\,b} = \varrho(X_{c\,d}\cdot X_{a\,b})$.
\item [$(ii)$] \quad Now we will show that \, $\varrho(g{\bf.}X_{a\,b})\,e_{j_{1}\dots j_{n}} = \varrho(X_{g(a)\,g(b)}\, g)\,e_{j_{1}\dots j_{n}}$.\\
Note that
\begin{align*}
L \equiv \varrho(g{\bf.}X_{a\,b})\,e_{j_{1}\dots j_{n}} &= \varrho_2(g)\cdot  \varrho_1(X_{a\,b})\,e_{j_{1}\dots j_{n}} = \varrho_2(g)\, q_{j_{a}j_{b}}\,e_{j_{1}\dots j_{n}}\\
&= q_{j_{a}j_{b}}\, \varrho_2(g)\,e_{j_{1}\dots j_{n}}\\ 
&= q_{j_{a}j_{b}}\,e_{j_{g^{-1}(1)}\dots j_{g^{-1}(n)}};\\
D \equiv \varrho(X_{g(a)\,g(b)}\, g)\,e_{j_{1}\dots j_{n}} &= \varrho_1(X_{g(a)\,g(b)})\cdot \varrho_2(g)\,e_{j_{1}\dots j_{n}}\\
&= Q_{g(a)\,g(b)}\,e_{j_{g^{-1}(1)}\dots j_{g^{-1}(n)}}\\ 
&= q_{j_{g^{-1}(g(a))}j_{g^{-1}(g(b))}}\,e_{j_{g^{-1}(1)}\dots j_{g^{-1}(n)}}\\
&= q_{j_{a}j_{b}}\,e_{j_{g^{-1}(1)}\dots j_{g^{-1}(n)}}.
\end{align*}
\end{itemize}
\end{proof} 
\noindent In the case $Q$ is a set we call the representation $\varrho$ in Proposition~\ref{alhom} a twisted regular representation.

\begin{Lemma}\label{rog}
The representation $\varrho$ applied to element $\displaystyle g^*=\left(\prod_{(a,b)\in I(g^{-1})} X_{a\,b} \right) g$\, is given by
\begin{equation}\label{rogtilda} 
         \varrho(g^*)\,e_{j_{1}\dots j_{n}}=\prod_{(a,b)\in I(g)} q_{j_{b}j_{a}}\,e_{j_{g^{-1}(1)}\dots j_{g^{-1}(n)}}.
\end{equation}
\end{Lemma}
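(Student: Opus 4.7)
The plan is to unwind the definitions in a straightforward way and then reduce to a combinatorial identity between the inversion sets $I(g^{-1})$ and $I(g)$.

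First, I would apply the definition of $\varrho$ from formula~(\ref{ro}) to $g^{*}=\bigl(\prod_{(a,b)\in I(g^{-1})} X_{a\,b}\bigr)g$, which is a decomposable element with $R_{n}$-part $\prod_{(a,b)\in I(g^{-1})}X_{a\,b}$ and $S_n$-part $g$. Using Definition~\ref{ro1} multiplicatively on the generators $X_{a\,b}$ (which is legitimate since the $Q_{a\,b}$ pairwise commute as noted after (\ref{Qab})), this gives
$$\varrho(g^{*})=\Bigl(\prod_{(a,b)\in I(g^{-1})} Q_{a\,b}\Bigr)\cdot \varrho_{2}(g).$$
Then I would apply this operator to $e_{j_{1}\dots j_{n}}$. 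Using Definition~\ref{ro2}, $\varrho_{2}(g)$ first produces the monomial $e_{j_{g^{-1}(1)}\dots j_{g^{-1}(n)}}$, after which each $Q_{a\,b}$ acts diagonally on this monomial. From the defining formula~(\ref{Qab}), applied to a monomial whose $a$-th index is $j_{g^{-1}(a)}$ and $b$-th index is $j_{g^{-1}(b)}$, the eigenvalue is $q_{j_{g^{-1}(a)}\,j_{g^{-1}(b)}}$. This yields
$$\varrho(g^{*})\,e_{j_{1}\dots j_{n}}=\Bigl(\prod_{(a,b)\in I(g^{-1})}q_{j_{g^{-1}(a)}\,j_{g^{-1}(b)}}\Bigr)\,e_{j_{g^{-1}(1)}\dots j_{g^{-1}(n)}}.$$

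The remaining step is the combinatorial identification of this scalar factor with $\prod_{(a,b)\in I(g)}q_{j_{b}\,j_{a}}$ appearing in (\ref{rogtilda}). The key is the standard bijection
$$I(g^{-1})\longrightarrow I(g),\qquad (a,b)\longmapsto (c,d):=(g^{-1}(b),g^{-1}(a)).$$
Indeed, $(a,b)\in I(g^{-1})$ means $a<b$ and $g^{-1}(a)>g^{-1}(b)$, i.e.\ $c<d$; and under this substitution one has $g(c)=b>a=g(d)$, so $(c,d)\in I(g)$. The inverse map $(c,d)\mapsto(g(d),g(c))$ shows it is a bijection. Under this reindexing, $q_{j_{g^{-1}(a)}\,j_{g^{-1}(b)}}=q_{j_{d}\,j_{c}}$, and renaming $(c,d)$ back to $(a,b)\in I(g)$ gives $\prod_{(a,b)\in I(g)}q_{j_{b}\,j_{a}}$, establishing (\ref{rogtilda}).

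I expect the only subtle point to be the bijection between $I(g^{-1})$ and $I(g)$ and the careful bookkeeping of which index becomes which; everything else is mechanical from Definitions~\ref{ro1} and~\ref{ro2}, the commutativity of the diagonal operators $Q_{a\,b}$, and the defining expression of $g^{*}$ in (\ref{gtilda}).
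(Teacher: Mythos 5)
Your proposal is correct and follows essentially the same route as the paper's own proof: unwind $\varrho(g^{*})$ via Definitions~\ref{ro1} and~\ref{ro2} to get the scalar $\prod_{(a,b)\in I(g^{-1})}q_{j_{g^{-1}(a)}\,j_{g^{-1}(b)}}$, then reindex by the bijection $(a,b)\mapsto(g^{-1}(b),g^{-1}(a))$ between $I(g^{-1})$ and $I(g)$. The paper carries out exactly this reindexing (writing $a=g^{-1}(a')$, $b=g^{-1}(b')$ and checking $(b,a)\in I(g)$); your version merely verifies the bijectivity a bit more explicitly.
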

\begin{proof}
Note that by applying the formula (\ref{ro}) on $g^*=\left(\prod_{(a',b')\in I(g^{-1})} X_{a'\,b'} \right)\, g$ we obtain
\begin{align*}
\varrho \left( g^* \right)\,e_{j_{1}\dots j_{n}} &= \prod_{(a',b')\in I(g^{-1})} \varrho_1\left( X_{a'\,b'} \right)\cdot \varrho_2(g)\,e_{j_{1}\dots j_{n}}\\
&=\prod_{(a',b')\in I(g^{-1})} q_{j_{g^{-1}(a')}j_{g^{-1}(b')}}\,e_{j_{g^{-1}(1)}\dots j_{g^{-1}(n)}}\\
&=\prod_{(b,a)\in I(g)} q_{j_{a}j_{b}}\,e_{j_{g^{-1}(1)}\dots j_{g^{-1}(n)}}=\prod_{(a,b)\in I(g)} q_{j_{b}j_{a}}\,e_{j_{g^{-1}(1)}\dots j_{g^{-1}(n)}}
\end{align*}
with $a=g^{-1}(a')$,\, $b=g^{-1}(b')$.\\ 
Now it is easy to check that $(a',b')\in I(g^{-1})$ implies\, $a'< b'$, $g^{-1}(a') > g^{-1}(b')$ i.e\, $g(a)< g(b)$, $a > b$. Thus we get $(b,a)\in I(g)$.
\end{proof}
\begin{Remark}\label{rotbaa}
A direct consequence of the Lemma~$\ref{rog}:$\, the element $\varrho(t_{b,a}^*)\in \textnormal{End}({\mathcal{B}}_{Q})$ is given by
$$\varrho(t_{b,a}^*)\,e_{j_{1}\dots j_{a}j_{a+1}\dots j_{b}\dots j_{n}} = \prod_{a \le i \le b-1} q_{j_{b}j_{i}}\,e_{j_{1}\dots j_{b}j_{a}\dots j_{b-1}\dots j_{n}}$$
and in special case \hspace{25pt} $\varrho(t_{a}^*)\,e_{j_{1}\dots j_{a}j_{a+1}\dots j_{n}} = q_{j_{a+1}j_{a}}\,e_{j_{1}\dots j_{a+1}j_{a}\dots j_{n}}$\\ 
$($recall\, $t_{a}^*=t_{a+1,a}^*).$\\
From the identity $(\ref{sqta})$ we obtain:
$$\varrho((t_{a}^*)^2)\,e_{j_{1}\dots j_{n}} = \sigma_{j_{a}j_{a+1}}\,e_{j_{1}\dots j_{n}},$$
where we denoted \, $\sigma_{j_{a}j_{a+1}}:=q_{j_{a}j_{a+1}} q_{j_{a+1}j_{a}}.$
\end{Remark}
Note that the matrices of the operators in the twisted regular representation of ${\mathcal{A}(S_{n})}$ on the generic (resp.~degenerate) weight subspaces ${\mathcal{B}}_{Q}\subset {\mathcal{B}}$ are square matrices of size $n!$ (resp.~$\textrm{Card}~\widehat{Q}$) whose entries are monomials (resp.~polynomials) in $q_{ij}$'s.\\

\noindent Recall that the element ${\alpha^*_{n}}\in {\mathcal{A}(S_{n})}$ is given by $\displaystyle{\alpha^*_{n}} = \sum_{g\in S_{n}} g^*, \, n\ge 1.$
\begin{Proposition}\label{roAQgen}
Let\, $\varrho \colon {\mathcal{A}(S_{n})} \to \textnormal{End}({\mathcal{B}}_{Q})$ be the twisted regular representation on the generic weight space ${\mathcal{B}}_{Q}$. Then the $(\underline{k},\underline{j})$-entry of the matrix ${\mathbf{A}}_{Q}$ of the $\varrho({\alpha^*_{n}})$ is given by
\begin{equation}\label{AQgen} 
         \left( {\mathbf{A}}_{Q} \right)_{\underline{k},\underline{j}} = \prod_{(a,b)\in I(g)} q_{j_{b}j_{a}},
\end{equation}
where $g$ satisfies\, $\underline{k}=g{\bf.}\underline{j}$ \, $(\underline{j}=j_{1}\dots j_{n}\in \widehat{Q}$, $\underline{k}=k_{1}\dots k_{n}\in \widehat{Q})$. 
\end{Proposition}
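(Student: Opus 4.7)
The plan is to evaluate $\varrho(\alpha_n^*) e_{\underline{j}}$ directly by summing the terms coming from each $g \in S_n$ and then reading off the coefficient of $e_{\underline{k}}$. Using linearity of $\varrho$ and Lemma~\ref{rog} one obtains
\[
\varrho(\alpha_n^*) e_{\underline{j}} \;=\; \sum_{g \in S_n} \varrho(g^*) e_{j_1 \dots j_n} \;=\; \sum_{g \in S_n} \Bigl(\prod_{(a,b) \in I(g)} q_{j_b j_a}\Bigr)\, e_{j_{g^{-1}(1)} \dots j_{g^{-1}(n)}}.
\]
The $(\underline{k},\underline{j})$-entry of $\mathbf{A}_Q$ is by definition the coefficient of $e_{\underline{k}}$ in this expansion, so I will isolate those $g$ for which $e_{j_{g^{-1}(1)}\dots j_{g^{-1}(n)}} = e_{\underline{k}}$, i.e.\ those $g$ such that $k_i = j_{g^{-1}(i)}$ for all $i$ (this is exactly the relation $\underline{k} = g{\bf.}\underline{j}$ induced by Definition~\ref{ro2}).

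The key use of the genericity hypothesis enters here: since $Q$ is a set, the entries $j_1,\dots,j_n$ are pairwise distinct, and likewise for $k_1,\dots,k_n$. Consequently the equations $k_i = j_{g^{-1}(i)}$ determine $g^{-1}$ uniquely, so for each $\underline{k} \in \widehat{Q}$ there is exactly one $g \in S_n$ contributing to the coefficient of $e_{\underline{k}}$. Substituting this unique $g$ gives precisely the formula (\ref{AQgen}).

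The only place I expect any subtlety is the bookkeeping between the two conventions used for the action of $S_n$ on indices: Lemma~\ref{rog} naturally produces $e_{j_{g^{-1}(1)}\dots j_{g^{-1}(n)}}$, and one must verify that this is what the statement means by ``$\underline{k} = g{\bf.}\underline{j}$''. This is immediate from Definition~\ref{ro2}, but it is the one sign/inverse check worth doing carefully. Everything else is a direct combination of Lemma~\ref{rog} with the uniqueness of the permutation matching two distinct-entry tuples, so the proof is short.
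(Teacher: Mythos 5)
Your proposal is correct and follows essentially the same route as the paper's proof: apply Lemma~\ref{rog} to each term of $\alpha^*_n=\sum_{g\in S_n}g^*$, expand $\varrho(\alpha^*_n)e_{\underline{j}}$, and read off the coefficient of $e_{\underline{k}}$. Your explicit remark that genericity of $Q$ forces a unique $g$ with $\underline{k}=g{\bf.}\underline{j}$ is a welcome clarification that the paper leaves implicit (and which is exactly what fails in the degenerate case of Remark~\ref{roAQdeg}).
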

\begin{proof}
By applying Lemma~\ref{rog} we see that $\varrho({\alpha^*_{n}})\in \textnormal{End}({\mathcal{B}}_{Q})$ acts as
$$\varrho({\alpha^*_{n}})\,e_{j_{1}\dots j_{n}}=\sum_{g\in S_{n}} \left( \prod_{(a,b)\in I(g)} q_{j_{b}j_{a}}\,e_{j_{g^{-1}(1)}\dots j_{g^{-1}(n)}}\right)$$
i.e
$$\varrho({\alpha^*_{n}})\,e_{\underline{j}}=\sum_{g\in S_{n}} \left( \prod_{(a,b)\in I(g)} q_{j_{b}j_{a}}\,e_{\underline{k}}\right).$$
Therefore we have that the $(\underline{k},\underline{j})$-entry of ${\mathbf{A}}_{Q}$ is equal to $\displaystyle\prod_{(a,b)\in I(g)} q_{j_{b}j_{a}}$, so the identity (\ref{AQgen}) follows.
\end{proof}
\noindent In particular, if $\textrm{Card}~Q=1$, then ${\mathbf{A}}_{Q}=1$. Thus, we suppose that $\textrm{Card}~Q=n\ge 2$.  
\begin{Remark}\label{roAQdeg} 
If\, $\varrho \colon {\mathcal{A}(S_{n})} \to \textnormal{End}({\mathcal{B}}_{Q})$ is the representation $($on the degenerate weight subspace ${\mathcal{B}}_{Q})$, then the $(\underline{k},\underline{j})$-entry of ${\mathbf{A}}_{Q}$ is given by
\begin{equation}\label{AQdeg} 
         \left( {\mathbf{A}}_{Q} \right)_{\underline{k},\underline{j}}=\sum_{g\in g(\underline{k},\underline{j})} \left( \prod_{(a,b)\in I(g)} q_{j_{b}j_{a}} \right),
\end{equation}
where\, $g(\underline{k},\underline{j}):=\{ g\in S_{n} \mid k_{p}=j_{g^{-1}(p)} \mbox{\, for all\, } 1\le p\le n\}$.
\end{Remark} 

\subsection{Factorization of the matrix ${\mathbf{A}}_{Q}$}

\noindent Let us denote\, ${\mathbf{T}}_{b,a} := \varrho(t_{b,a}^*)$, ${\mathbf{T}}_{a} := \varrho(t_{a}^*)$. If $b=a$ then ${\mathbf{T}}_{b,a} = \mathbf{I}$. The $(\underline{k},\underline{j})$-entry of the corresponding matrices ${\mathbf{T}}_{b,a}, \, 1\le a < b\le n$ and ${\mathbf{T}}_{a}, \, 1\le a\le n-1$ are given by 
\begin{equation}\label{entTba} 
 \left( {\mathbf{T}}_{b,a} \right)_{\underline{k},\underline{j}}= \left\{ 
   \begin{array}{cl}      
         \displaystyle\prod_{a\le i \le b-1} q_{j_{b}j_{i}} & \textrm{if\, } \underline{k}=t_{b,a}{\bf.}\underline{j}\\
          0 & \textrm{otherwise}
    \end{array} \right.     
\end{equation}
with \, $t_{b,a}{\bf.}\underline{j} = j_{1}\dots j_{b}j_{a}\dots j_{b-1}\dots j_{n}$\, and 
\begin{equation}\label{entTa} 
 \left( {\mathbf{T}}_{a} \right)_{\underline{k},\underline{j}}= \left\{ 
   \begin{array}{cl}      
         q_{j_{a+1}j_{a}} & \textrm{if\, } \underline{k}=t_{a}{\bf.}\underline{j}\\
          0 & \textrm{otherwise}
    \end{array} \right.     
\end{equation}
with \, $t_{a}{\bf.}\underline{j} = j_{1}\dots j_{a+1}j_{a}\dots j_{n}$ (see Remark~\ref{rotbaa}). Now it is easy to see that
\begin{equation}\label{entTa2} 
({\mathbf{T}}_{a})^2\,e_{\underline{j}}=\sigma_{j_{a}j_{a+1}}\,e_{\underline{j}} 
\end{equation}
is the diagonal matrix with $\sigma_{j_{a}j_{a+1}}$ as $\underline{j}$-$th$ diagonal entry.\\

\noindent Now we consider the elements ${\beta^*_{k}}\in {\mathcal{A}(S_{n})}$,\, $1\le k\le n$ defined before (\ref{alphafac}) in Section 3. The corresponding elements $\varrho({\beta^*_{n-k+1}}) \in \textnormal{End}({\mathcal{B}}_{Q})$ are given by 
\begin{align*}
\varrho({\beta^*_{n-k+1}})\,e_{\underline{j}} &= \varrho \left( \sum_{k\le m\le n} t_{m,k}^* \right) e_{\underline{j}} = \left(\sum_{k\le m\le n} \varrho(t_{m,k}^*)\right) e_{\underline{j}}\\
&= \left(\sum_{k+1\le m\le n} \varrho(t_{m,k}^*) + \varrho(id) \right) e_{\underline{j}}
\end{align*}
i.e
\begin{equation}\label{robetank1}
\varrho({\beta^*_{n-k+1}})\,e_{\underline{j}} = \sum_{k+1\le m\le n} \varrho(t_{m,k}^*)\,e_{\underline{j}} + e_{\underline{j}}
\end{equation}
(see Remark~\ref{rotbaa} and also Definition~\ref{ro2}). Let 
$${\mathbf{B}}_{Q,n-k+1} := \varrho({\beta^*_{n-k+1}}), \quad 1\le k\le n-1,$$ 
where \, ${\mathbf{B}}_{Q,1}=\varrho({\beta^*_{1}})=\varrho(id)={\mathbf{I}}$. Then in the matrix notation (\ref{robetank1}) can be written  $$\displaystyle{\mathbf{B}}_{Q,n-k+1} = \sum_{k+1\le m\le n} {\mathbf{T}}_{m,k} + {\mathbf{I}}$$ 
or shorter \quad $\displaystyle{\mathbf{B}}_{Q,n-k+1} = \sum_{k\le m\le n} {\mathbf{T}}_{m,k}$.\\ 

\noindent Note that the $(\underline{k},\underline{j})$-entry of ${\mathbf{B}}_{Q,n-k+1}$\, ($1\le k \le n-1$) is equal to $\displaystyle\prod_{k\le i \le m-1} q_{j_{m}j_{i}}$ if\, $\underline{k} = t_{m,k}{\bf.}\underline{j} = j_{1}\dots j_{m}j_{k}\dots j_{m-1}\dots j_{n}$,\, otherwise it is equal to zero.\\ 

\noindent Thus we can write
\begin{equation}\label{entBQn-k+1} 
 \left( {\mathbf{B}}_{Q,n-k+1} \right)_{\underline{k},\underline{j}}= \left\{ 
   \begin{array}{cl}      
         \displaystyle\prod_{k\le i < m} q_{j_{m}j_{i}} & \textrm{if\, } \underline{k}=t_{m,k}{\bf.}\underline{j} \quad k\le m\le n\\
          0 & \textrm{otherwise}
    \end{array} \right.     
\end{equation}
for each $1\le k \le n-1$.\\ 
In the special case for\, $m=k$\, all $\underline{j}$-$th$ diagonal entries of ${\mathbf{B}}_{Q,n-k+1}$ are equal to one.
\begin{Remark}\label{matBQn}
For $k=1$ we have 
$${\mathbf{B}}_{Q,n} = \sum_{1\le m\le n} {\mathbf{T}}_{m,1} = {\mathbf{T}}_{n,1} + {\mathbf{T}}_{n-1,1} +\cdots + {\mathbf{T}}_{3,1} + {\mathbf{T}}_{2,1} + {\mathbf{I}}$$
$($where\, ${\mathbf{T}}_{1,1} = {\mathbf{I}})$ so the $(\underline{k},\underline{j})$-entry of\, ${\mathbf{B}}_{Q,n}$\, is given by
\begin{equation*} 
 \left( {\mathbf{B}}_{Q,n} \right)_{\underline{k},\underline{j}}= \left\{ 
   \begin{array}{cl}      
         \displaystyle q_{j_{m}j_{1}}\cdots q_{j_{m}j_{m-1}} & \textrm{if\, } \underline{k}=t_{m,1}{\bf.}\underline{j} \quad 1\le m\le n\\
          0 & \textrm{otherwise}
    \end{array} \right.     
\end{equation*}
$($with\, $\underline{k} = j_{m}j_{1}\dots j_{m-1}j_{m+1}\dots j_{n})$. In other words we get the following identity
\begin{equation*}
{\mathbf{B}}_{Q,n}\,e_{\underline{j}} = \sum_{1\le m\le n} q_{j_{m}j_{1}}\cdots q_{j_{m}j_{m-1}}\,e_{j_{m}j_{1}\dots j_{m-1}j_{m+1}\dots j_{n}}
\end{equation*}
$($compare with $(\ref{partQ}))$. Now it is easy to see that the matrix ${\mathbf{B}}_{Q,n}$ is equal to the matrix ${\textrm{B}_Q}$ $($i.e the matrix of ${\partial^Q}$ w.r.t monomial basis of ${{\mathcal{B}}_{Q}}\subset {\mathcal{B}}$; see first section$)$. It turns out that the factorization of the matrix ${\mathbf{B}}_{Q,n}$ is equivalent to factorization of ${\textrm{B}_Q}$, so the problem of computing $\det{\textrm{B}_{Q}}$ can be reduced to the problem of computing $\det{{\mathbf{B}}_{Q,n}}$. With this motivation we are going to find a formula for the factorization of ${\mathbf{B}}_{Q,n}$ and also its determinant.
\end{Remark}
\noindent In what follows we will consider the additional elements ${\gamma^*_{k}}$, ${\delta^*_{k}}$, $1\le k\le n$ in the algebra ${\mathcal{A}(S_{n})}$ defined after (\ref{alphafac}) in Section 3. The corresponding elements $\varrho({\gamma^*_{n-k+1}}), \varrho({\delta^*_{n-k+1}}) \in \textnormal{End}({\mathcal{B}}_{Q})$, $1\le k\le n-1$ are given by
\begin{align*}
\varrho({\gamma^*_{n-k+1}})\,e_{\underline{j}} &=\left( id-\varrho(t_{n,k}^*)\right)\cdot \left(id-\varrho(t_{n-1,k}^*)\right)\cdots \left(id -\varrho(t_{k+1,k}^*)\right) e_{\underline{j}}\\
\varrho({\delta^*_{n-k+1}})\,e_{\underline{j}} &= \left( id-\varrho((t_{k}^*)^{2})\, \varrho(t_{n,k+1}^*)\right)\cdot \left( id-\varrho((t_{k}^*)^{2})\, \varrho(t_{n-1,k+1}^*)\right)\cdots\\ 
&\hspace{20pt} \left( id-\varrho((t_{k}^*)^{2})\, \varrho(t_{k+2,k+1}^*)\right)\cdot \left( id-\varrho((t_{k}^*)^{2})\right) e_{\underline{j}}
\end{align*}
which in matrix notation corresponds to the following expressions 
\begin{align*}
{\mathbf{C}}_{Q,n-k+1} &=\left( {\mathbf{I}}-{\mathbf{T}}_{n,k}\right)\cdot \left( {\mathbf{I}}-{\mathbf{T}}_{n-1,k}\right)\cdots \left( {\mathbf{I}}-{\mathbf{T}}_{k+1,k}\right)\\
{\mathbf{D}}_{Q,n-k+1} &=\left( {\mathbf{I}}-({\mathbf{T}}_{k})^{2}\, {\mathbf{T}}_{n,k+1}\right)\cdot \left( {\mathbf{I}}-({\mathbf{T}}_{k})^{2}\, {\mathbf{T}}_{n-1,k+1}\right)\cdots \left( {\mathbf{I}}-({\mathbf{T}}_{k})^{2}\right)
\end{align*}
Here we have introduced notations
$${\mathbf{C}}_{Q,n-k+1} := \varrho({\gamma^*_{n-k+1}}),\quad {\mathbf{D}}_{Q,n-k+1} := \varrho({\delta^*_{n-k+1}}),$$
$1\le k\le n-1$. Clearly, $({\mathbf{T}}_{k})^{2} = ({\mathbf{T}}_{k+1,k})^{2}$ is the diagonal matrix given by (\ref{entTa2}).\\
By using the identity (\ref{betafac}) we obtain
$${\mathbf{B}}_{Q,n-k+1} = {\mathbf{D}}_{Q,n-k+1} \cdot \left( {\mathbf{C}}_{Q,n-k+1}\right)^{-1} \hspace{15pt} \textrm{for all} \hspace{10pt} 1\le k\le n-1.$$
and more precisely
\begin{align*}
{\mathbf{B}}_{Q,n-k+1} =& \left( {\mathbf{I}}-({\mathbf{T}}_{k})^{2}\, {\mathbf{T}}_{n,k+1}\right)\cdot \left( {\mathbf{I}}-({\mathbf{T}}_{k})^{2}\, {\mathbf{T}}_{n-1,k+1}\right)\cdots \left( {\mathbf{I}}-({\mathbf{T}}_{k})^{2}\, {\mathbf{T}}_{k+2,k+1} \right)\\
&\cdot \left( {\mathbf{I}}-({\mathbf{T}}_{k})^{2}\right)\cdot \left( {\mathbf{I}}-{\mathbf{T}}_{k+1,k}\right)^{-1}\cdots \left( {\mathbf{I}}-{\mathbf{T}}_{n-1,k}\right)^{-1}\cdot \left( {\mathbf{I}}-{\mathbf{T}}_{n,k}\right)^{-1}
\end{align*}
or in shorter form
\begin{equation}\label{BQn-k+1fac}
{\mathbf{B}}_{Q,n-k+1} = \prod_{k+1\le m\le n}^{\gets} \left( {\mathbf{I}}-({\mathbf{T}}_{k})^{2}\, {\mathbf{T}}_{m,k+1} \right)\cdot \prod_{k+1\le m\le n}^{\to} \left( {\mathbf{I}}-{\mathbf{T}}_{m,k} \right)^{-1}.
\end{equation}
On the other hand, from the identity (\ref{alphafac}) in ${\mathcal{A}(S_{n})}$ we get
$${\mathbf{A}}_{Q} = \prod_{1\le k\le n-1}^{\gets} \left( {\mathbf{D}}_{Q,n-k+1} \cdot \left( {\mathbf{C}}_{Q,n-k+1}\right)^{-1} \right)$$
i.e
\begin{equation}\label{AQfac}
{\mathbf{A}}_{Q} =\prod_{1\le k\le n-1}^{\gets} \left ( \prod_{k+1\le m\le n}^{\gets} \left( {\mathbf{I}}-({\mathbf{T}}_{k})^{2}\, {\mathbf{T}}_{m,k+1} \right)\cdot \prod_{k+1\le m\le n}^{\to} \left( {\mathbf{I}}-{\mathbf{T}}_{m,k} \right)^{-1}\right).
\end{equation}
Now it is easy to see that for computing $\det{\mathbf{B}}_{Q,n-k+1}$ and $\det{\mathbf{A}}_{Q}$ it is enough to compute $\det({\mathbf{I}}-{\mathbf{T}}_{b,a})$ and $\det({\mathbf{I}}-({\mathbf{T}}_{a-1})^{2}\, {\mathbf{T}}_{b,a})$ ($1\le a < b\le n$).\\

\noindent Let us denote
\begin{align*}
Q \choose m &= \{ T\subseteq Q \mid \textrm{Card}~T = m\},\\
\sigma_T &= \prod_{\{i\ne j \}\subset T} \sigma_{ij} = \prod_{i\ne j \in T} q_{ij}.
\end{align*} 
\begin{Lemma}\label{factdet}
Let $\varrho \colon {\mathcal{A}(S_{n})} \to \textnormal{End}({\mathcal{B}}_{Q})$ be the twisted regular representation of twisted group algebra ${\mathcal{A}(S_{n})}$ on any generic subspace ${\mathcal{B}}_{Q}$ of the algebra ${\mathcal{B}}$. Then
\begin{itemize}
\item [$(i)$] \quad $\displaystyle \det({\mathbf{I}}-{\mathbf{T}}_{b,a}) = \prod_{T\in {Q \choose b-a+1}} (1-\sigma_T)^{(b-a)!\cdot (n-b+a-1)!}$ \quad $(1\le a < b\le n)$\\
\item [$(ii)$] \quad $\displaystyle \det({\mathbf{I}}-({\mathbf{T}}_{a-1})^{2}\, {\mathbf{T}}_{b,a}) = \prod_{T\in {Q \choose b-a+2}} (1-\sigma_T)^{(b-a)!\cdot (b-a+2)\cdot (n-b+a-2)!}$\\
$(1 < a \le b\le n)$.
\end{itemize} 
\end{Lemma}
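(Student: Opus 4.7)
The plan is to exploit the fact that both $\mathbf{T}_{b,a}$ and $(\mathbf{T}_{a-1})^2\mathbf{T}_{b,a}$ are scalar-weighted permutation matrices whose underlying permutation is $t_{b,a}$: by (\ref{entTba}) and (\ref{entTa2}), each sends $e_{\underline{j}}$ to a scalar multiple of $e_{t_{b,a}\cdot \underline{j}}$, because $(\mathbf{T}_{a-1})^2$ is diagonal and only alters the scalar factor. Writing $\widehat{Q}$ as the disjoint union of the orbits of $t_{b,a}$, both matrices become block-diagonal, and on each orbit $\mathbf{I}$ minus the operator is a cyclic matrix of the form $\mathrm{diag}(1,\dots,1)$ minus a cyclic shift with weights $\lambda_0,\dots,\lambda_{b-a}$, whose determinant is the standard $1-\lambda_0\lambda_1\cdots\lambda_{b-a}$. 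The proof reduces to (a) identifying the product of weights around each orbit, and (b) counting orbits.

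For the orbit structure, since $t_{b,a}$ is a cycle of length $b-a+1$ on positions $\{a,a+1,\dots,b\}$ and $Q$ is a set, every orbit in $\widehat{Q}$ has size exactly $b-a+1$. Two tuples lie in the same orbit iff they agree outside positions $\{a,\dots,b\}$ and differ by a cyclic rotation inside them. I would parametrize the orbits by the subset $T\subseteq Q$ of values occurring in positions $\{a,\dots,b\}$: for each $T$ with $|T|=b-a+1$, the $(b-a+1)!$ internal arrangements partition into $(b-a)!$ cyclic orbits, and the complementary $n-b+a-1$ positions are filled by $Q\setminus T$ in $(n-b+a-1)!$ ways, giving $(b-a)!\,(n-b+a-1)!$ orbits per $T$.

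For (i), I would trace the scalar weights around a fixed orbit. By (\ref{entTba}), when $s\in T$ occupies position $b$ the weight is $\prod_{s'\in T\setminus\{s\}} q_{ss'}$, and each element of $T$ occupies position $b$ exactly once along the orbit. Multiplying yields $L=\prod_{s\in T}\prod_{s'\in T\setminus\{s\}} q_{ss'}=\sigma_T$, so each orbit contributes a factor $1-\sigma_T$, and (i) follows by raising to the orbit count computed above.

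For (ii) the same orbit decomposition applies, but each step picks up the extra factor $\sigma_{j_{a-1}j_b}$ from (\ref{entTa2}), with $x:=j_{a-1}$ fixed along the orbit and $j_b$ cycling through $T$. Multiplying these extras around the orbit produces $\prod_{s\in T}\sigma_{xs}$, and combined with $\sigma_T$ this equals $\sigma_{T'}$ for $T':=T\cup\{x\}$ of size $b-a+2$, via the algebraic identity $\sigma_{T\cup\{x\}}=\sigma_T\cdot\prod_{s\in T}q_{xs}q_{sx}$ (which just resolves $\sigma_{T'}=\prod_{i\neq j\in T'}q_{ij}$ by isolating the pairs that involve $x$). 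For the count per $T'\in\binom{Q}{b-a+2}$ one must additionally choose which element of $T'$ sits at the anchor position $a-1$, producing the extra factor $b-a+2$ in the exponent; the remaining factor $(b-a)!\,(n-b+a-2)!$ comes from the same cyclic/complement count as in (i). The delicate step I expect to be most careful with is precisely this anchor-count: what was a $(b-a+1)$-subset indexing scheme in (i) becomes a $(b-a+2)$-subset plus marked element in (ii), and it is easy to either double-count the marked position or forget the internal cyclic quotient.
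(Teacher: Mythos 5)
Your proposal is correct and takes essentially the same route as the paper: decompose the generic space $\mathcal{B}_Q$ into orbits of the cyclic group generated by $t_{b,a}$, observe that each block is a weighted cyclic shift whose determinant under $\mathbf{I}$ minus it is $1-\sigma_T$ (with $T$ enlarged by the anchor $j_{a-1}$ in case $(ii)$), and count orbits per subset. Your counting argument, including the factor $b-a+2$ from the choice of marked element in $(ii)$, is exactly the count the paper asserts, and you actually justify it in more detail than the paper does.
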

\noindent Note that this Lemma is the twisted group algebra analogue of the Lemma~1.9.1 in the paper of Svrtan and Meljanac (see \cite{3}). Therefore the proof will be similar to the proof of Lemma 1.9.1. Here we use the factorizations in different direction.  
\begin{proof}
\begin{itemize}
\item [$(i)$] \quad Let $H:=\langle t_{b,a} \rangle\subset S_{n}$ be the cyclic subgroup of $S_{n}$ generated by the cycle $t_{b,a}$ (whose lenght is equal to $b-a+1$). Then every $H-$orbit on generic subspace ${\mathcal{B}}_{Q}$ is given by
$${\mathcal{B}}_{Q}^{[\underline{j}]_a^b} = span_{{\mathbb{C}}} \left\{ e_{t_{b,a}^k{\bf.}\underline{j}} \mid 0\le k\le b-a \right\},$$
which corresponds to a cyclic $t_{b,a}-$equivalence class\\ 
$[\underline{j}]_a^b = j_1j_2\dots (j_aj_{a+1}\dots j_b)\dots j_n$ \, of the sequence $\underline{j}=j_1\dots j_n\in \widehat{Q}$.\\ 
We get
$${\mathbf{T}}_{b,a}\left( e_{t_{b,a}^k{\bf.}\underline{j}} \right) = c_k\,e_{t_{b,a}^{k+1}{\bf.}\underline{j}}, \hspace{30pt} 0\le k\le b-a,$$
where
\begin{align*}
c_{0} &= q_{j_{b}j_{a}}q_{j_{b}j_{a+1}}q_{j_{b}j_{a+2}}\cdots q_{j_{b}j_{b-1}},\\
c_{1} &= q_{j_{b-1}j_{b}}q_{j_{b-1}j_{a}}q_{j_{b-1}j_{a+1}}\cdots q_{j_{b-1}j_{b-2}},\\
c_{2} &= q_{j_{b-2}j_{b-1}}q_{j_{b-2}j_{b}}q_{j_{b-2}j_{a}}\cdots q_{j_{b-2}j_{b-3}},\\
\vdots \\
c_{b-a-1} &= q_{j_{a+1}j_{a+2}}q_{j_{a+1}j_{a+3}}q_{j_{a+1}j_{a+4}}\cdots q_{j_{a+1}j_{a}},\\
c_{b-a} &= q_{j_{a}j_{a+1}}q_{j_{a}j_{a+2}}q_{j_{a}j_{a+3}}\cdots q_{j_{a}j_{b}}.
\end{align*}
By using (\ref{entTba}) and by applying the identities given above, we obtain the following
\begin{equation}\label{potTba} 
\left( {\mathbf{T}}_{b,a} \right)^k\,e_{\underline{j}} = \prod_{0\le i\le k-1} c_{i}\,e_{t_{b,a}^k{\bf.}\underline{j}}, \hspace{25pt} 1\le k\le b-a+1.
\end{equation}
\noindent By considering that\, ${\mathbf{T}}_{b,a}|{\mathcal{B}}_{Q}^{[\underline{j}]_a^b}$ is a cyclic operator (which corresponds to cyclic matrix ${\mathbf{T}}_{b,a}$) we can write
\begin{align*}
\det \left( ({\mathbf{I}}-{\mathbf{T}}_{b,a})|{\mathcal{B}}_{Q}^{[\underline{j}]_a^b}\right) &= 1- \prod_{0\le i\le b-a} c_{i}\\ 
&= 1- \prod_{i\neq j\in T} q_{ij} = 1- \prod_{\{i,j\}\subset T} \sigma_{ij}
\end{align*}
i.e
\begin{equation}\label{detIminT}
\det \left( ({\mathbf{I}}-{\mathbf{T}}_{b,a})|{\mathcal{B}}_{Q}^{[\underline{j}]_a^b}\right) = 1-\sigma_T
\end{equation}
where\, $\sigma_{ij}=q_{ij}q_{ji}$,\, $T = \{j_a,\dots, j_b \}$ ($1\le a < b\le n$)\, and\, $\textrm{Card}~T = b-a+1$.\\ 
Now we give that there are $(b-a)!\cdot(n-(b-a+1))! = (b-a)!\cdot(n-b+a-1)!$\, $H-$orbits for which determinant (\ref{detIminT}) gets the value of\, $1-\sigma_T$, therefore we obtain \, 
$\det({\mathbf{I}}-{\mathbf{T}}_{b,a}) = \prod_{T\in {Q \choose b-a+1}} (1-\sigma_T)^{(b-a)!\cdot (n-b+a-1)!}$.\\
\item [$(ii)$] \quad Will be proven in a manner similar to $(i)$, where we have
$$({\mathbf{T}}_{a-1})^{2}\, {\mathbf{T}}_{b,a}\left( e_{t_{b,a}^k{\bf.}\underline{j}} \right) = d_k\,e_{t_{b,a}^{k+1}{\bf.}\underline{j}}, \hspace{30pt} 0\le k\le b-a,$$
with
\begin{align*}
d_{0} &= \sigma_{j_{a-1}j_{b}} c_{0}= \sigma_{j_{a-1}j_{b}} q_{j_{b}j_{a}}q_{j_{b}j_{a+1}}q_{j_{b}j_{a+2}}\cdots q_{j_{b}j_{b-1}},\\
d_{1} &=\sigma_{j_{a-1}j_{b-1}} c_{1}= \sigma_{j_{a-1}j_{b-1}} q_{j_{b-1}j_{b}}q_{j_{b-1}j_{a}}q_{j_{b-1}j_{a+1}}\cdots q_{j_{b-1}j_{b-2}},\\
d_{2} &=\sigma_{j_{a-1}j_{b-2}} c_{2}= \sigma_{j_{a-1}j_{b-2}} q_{j_{b-2}j_{b-1}}q_{j_{b-2}j_{b}}q_{j_{b-2}j_{a}}\cdots q_{j_{b-2}j_{b-3}},\\
\vdots \\
d_{b-a-1} &=\sigma_{j_{a-1}j_{a+1}} c_{b-a+1}= \sigma_{j_{a-1}j_{a+1}} q_{j_{a+1}j_{a+2}}q_{j_{a+1}j_{a+3}}q_{j_{a+1}j_{a+4}}\cdots q_{j_{a+1}j_{a}},\\
d_{b-a} &=\sigma_{j_{a-1}j_{a}} c_{b-a}= \sigma_{j_{a-1}j_{a}} q_{j_{a}j_{a+1}}q_{j_{a}j_{a+2}}q_{j_{a}j_{a+3}}\cdots q_{j_{a}j_{b}}.
\end{align*}
Here we have
\begin{equation}\label{potTa-1Tba} 
\left( ({\mathbf{T}}_{a-1})^{2}\, {\mathbf{T}}_{b,a} \right)^k\,e_{\underline{j}} = \prod_{0\le i\le k-1} d_{i}\,e_{t_{b,a}^k{\bf.}\underline{j}}, \hspace{25pt} 1\le k\le b-a+1.
\end{equation}
and
\begin{align*}
\det \left( ({\mathbf{I}}-({\mathbf{T}}_{a-1})^{2}\, {\mathbf{T}}_{b,a})|{\mathcal{B}}_{Q}^{[\underline{j}]_a^b}\right) &= 1- \prod_{0\le i\le b-a} d_{i}\\ 
&= 1- \prod_{i\neq j\in T} q_{ij} = 1- \prod_{\{i,j\}\subset T} \sigma_{ij}
\end{align*}
i.e
\begin{equation}\label{detIminTT}
\det \left( ({\mathbf{I}}-({\mathbf{T}}_{a-1})^{2}\, {\mathbf{T}}_{b,a})|{\mathcal{B}}_{Q}^{[\underline{j}]_a^b}\right) = 1-\sigma_T
\end{equation}
where\, $T = \{j_{a-1}j_a,\dots, j_b \}$ ($1 < a \le b\le n$)\, and\, $\textrm{Card}~T = b-a+2$. 
There are\, $(b-a)!\cdot(b-a+2)\cdot(n-b+a-2)!$\, $H-$orbits for which determinant (\ref{detIminTT}) gets the value of\, $1-\sigma_T$, therefore we have proved $(ii)$.
\end{itemize} 
\end{proof}
\begin{Theorem}\label{factdetAQB}
Let $\varrho \colon {\mathcal{A}(S_{n})} \to \textnormal{End}({\mathcal{B}}_{Q})$ be a twisted regular representation $($where ${\mathcal{B}}_{Q}$ is generic subspace of ${\mathcal{B}})$. Then we have
\begin{itemize}
\item [$(i)$] \quad $\displaystyle\det{\mathbf{A}}_{Q} = \prod_{2 \le m\le n}\, \prod_{T\in {Q \choose m}} (1-\sigma_T)^{(m-2)!\cdot (n-m+1)!}$,\\
\item [$(ii)$] \quad $\displaystyle\det{\mathbf{B}}_{Q,n-k+1} = \prod_{2 \le m\le n-k+1}\, \prod_{T\in {Q \choose m}} (1-\sigma_T)^{(m-2)!\cdot (n-m)!}$,\quad $(1 < k \le n-1)$.

\end{itemize} 
\end{Theorem}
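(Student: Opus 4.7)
The plan is to reduce both parts to Lemma~\ref{factdet} via the matrix factorizations (\ref{BQn-k+1fac}) and (\ref{AQfac}) already established.

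For (ii), I would take the determinant of both sides of (\ref{BQn-k+1fac}). Since $\det$ is multiplicative and blind to the order of factors,
$$\det{\mathbf{B}}_{Q,n-k+1} \;=\; \prod_{k+1 \le m \le n} \frac{\det\bigl({\mathbf{I}} - ({\mathbf{T}}_{k})^{2}\,{\mathbf{T}}_{m,k+1}\bigr)}{\det\bigl({\mathbf{I}} - {\mathbf{T}}_{m,k}\bigr)}.$$
The key point is that, applying Lemma~\ref{factdet}(i) with $a=k,\,b=m$ and Lemma~\ref{factdet}(ii) with $a=k+1,\,b=m$, the numerator and the denominator at the same $m$ both contribute factors indexed by subsets $T\subseteq Q$ of the \emph{same} cardinality $s := m-k+1$, so they combine inside a single product over ${Q \choose s}$. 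In terms of $s$ the resulting exponent of $(1-\sigma_T)$ is
$$(s-2)!\,s\,(n-s)! \;-\; (s-1)!\,(n-s)! \;=\; (s-2)!\,(n-s)!\,\bigl[s-(s-1)\bigr] \;=\; (s-2)!\,(n-s)!,$$
and as $m$ sweeps from $k+1$ to $n$, $s$ sweeps from $2$ to $n-k+1$, which gives (ii) upon renaming $s\to m$.

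For (i), I would use (\ref{AQfac}) to express ${\mathbf{A}}_{Q}$ as an ordered product of the matrices ${\mathbf{B}}_{Q,n-k+1}$ for $k=1,\ldots,n-1$, hence $\det{\mathbf{A}}_{Q}=\prod_{k=1}^{n-1}\det{\mathbf{B}}_{Q,n-k+1}$. Inserting (ii) and swapping the order of products, for each fixed $m\in\{2,\ldots,n\}$ the factor $(1-\sigma_T)^{(m-2)!(n-m)!}$ appears for exactly those $k$ satisfying $k\le n-m+1$, i.e.\ for $k=1,\ldots,n-m+1$. There are $n-m+1$ such indices, so the total exponent of $(1-\sigma_T)$ is
$$(n-m+1)\,(m-2)!\,(n-m)! \;=\; (m-2)!\,(n-m+1)!,$$
which is precisely (i).

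I do not anticipate any substantial obstacle: once Lemma~\ref{factdet} is in hand, the argument is essentially a determinant computation. The only subtlety worth flagging is the bookkeeping --- specifically, the observation that at a fixed $m$ both types of factors in (\ref{BQn-k+1fac}) produce subsets of the same cardinality $m-k+1$, which is what makes the two exponents combine cleanly; the remainder is careful index counting when reassembling (i) from the $n-1$ copies of (ii).
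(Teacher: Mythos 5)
Your proposal is correct and follows essentially the same route as the paper's proof: both reduce to Lemma~\ref{factdet} via the factorizations (\ref{BQn-k+1fac}) and (\ref{AQfac}), with the identical exponent bookkeeping $(s-2)!\,s\,(n-s)!-(s-1)!\,(n-s)!=(s-2)!\,(n-s)!$ and the count of $n-m+1$ admissible values of $k$ for each fixed $m$. The only cosmetic difference is that the paper first assembles $\det{\mathbf{D}}_{Q,n-k+1}$ and $\det{\mathbf{C}}_{Q,n-k+1}$ as whole products and then divides, whereas you cancel numerator against denominator factor by factor at each $m$.
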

\noindent This Theorem is similar to Theorem~1.9.2 in \cite{3}.
\begin{proof}
By using Lemma~\ref{factdet} we get the following:
\begin{align*}
\det{\mathbf{C}}_{Q,n-k+1} &= \prod_{k+1 \le p\le n} \det({\mathbf{I}}-{\mathbf{T}}_{p,k})\\ 
&= \prod_{k+1 \le p\le n}\, \prod_{T\in {Q \choose p-k+1}} (1-\sigma_T)^{(p-k)!\cdot (n-p+k-1)!}\\
&= \prod_{2 \le m\le n-k+1}\, \prod_{T\in {Q \choose m}} (1-\sigma_T)^{(m-1)!\cdot (n-m)!}
\end{align*}
\begin{align*}
\det{\mathbf{D}}_{Q,n-k+1} &= \prod_{k+1 \le p\le n} \det({\mathbf{I}}-({\mathbf{T}}_{k})^{2}\, {\mathbf{T}}_{p,k+1})\\ 
&= \prod_{k+1 \le p\le n}\, \prod_{T\in {Q \choose p-k+1}} (1-\sigma_T)^{(p-k-1)!\cdot (p-k+1)\cdot (n-p+k-1)!}\\
&= \prod_{2 \le m\le n-k+1}\, \prod_{T\in {Q \choose m}} (1-\sigma_T)^{(m-2)!\cdot m\cdot (n-m)!}
\end{align*}
Therefore by applying the formula \, $\displaystyle\det{\mathbf{B}}_{Q,n-k+1}=\frac{\det{\mathbf{D}}_{Q,n-k+1}}{\det{\mathbf{C}}_{Q,n-k+1}}$\, (see also (\ref{BQn-k+1fac})) we get
the formula given in $(ii)$, i.e
$$\det{\mathbf{B}}_{Q,n-k+1} = \prod_{2 \le m\le n-k+1}\prod_{T\in {Q \choose m}} (1-\sigma_T)^{(m-2)!\cdot (n-m)!}.$$ 
On the other hand by applying \, $\displaystyle\det{\mathbf{A}}_{Q}=\prod_{1\le k\le n-1}^{\gets} \det{\mathbf{B}}_{Q,n-k+1}$\, (see also (\ref{AQfac})) we get
\begin{align*}
\det{\mathbf{A}}_{Q} &=\prod_{1\le k\le n-1}^{\gets} \, \prod_{2\le m\le n-k+1}\, \prod_{T\in {Q \choose m}} (1-\sigma_T)^{(m-2)!\cdot (n-m)!}\\
&= \prod_{2 \le m\le n}\, \prod_{T\in {Q \choose m}} (1-\sigma_T)^{(m-2)!\cdot (n-m)!\cdot (n-m+1)}
\end{align*}
i.e
$$\det{\mathbf{A}}_{Q} = \prod_{2 \le m\le n}\, \prod_{T\in {Q \choose m}} (1-\sigma_T)^{(m-2)!\cdot (n-m+1)!}.$$
\end{proof}

\noindent More results of this type will be given in a subsequent paper on computation of constants in multiparametric quon algebras by using a twisted group algebra approach (c.f \cite{8}), which is more efficient than the approach of the paper \cite{9}.


\begin{thebibliography}{99} 

\bibitem{1}{Duchamp, G., Klyachko, A., Krob, D., and Thibon, J.-Y., (1997), \em Noncommutative symmetric functions III: Deformations of Cauchy and convolution algebras}, Discrete Math. Theor. Comput. Sci., 1, 159 - 216.

\bibitem{2}{Meljanac, S., Perica, A., and Svrtan, D., (2003), \em The energy operator for a model with a multiparametric infinite statistics}, J. Phys., A36, No.23, 6337 - 6349. 

\bibitem{3}{Meljanac, S., and Svrtan, D., (2003), \em Determinants and inversion of Gram matrices in Fock representation of ${q_{kl}}$-canonical commutation relations and applications to hyperplane arrangements and quantum groups. Proof of an extension of Zagier's conjecture}, arXiv:math-ph/0304040vl.

\bibitem{4}{Meljanac, S., and Svrtan, D., (1996), \em Study of Gram matrices in Fock representation of multiparametric canonical commutation relations, extended Zagier's conjecture, hyperplane arrangements and quantum groups}, Math. Commun., 1, 1 - 24. 

\bibitem{5}{Rotman, J.J., (2003), \em Advanced Modern Algebra,} Second printing, Prentice Hall, New Jersey.


\bibitem{6}{Rotman, J.J., (1994), \em An Introduction to the Theory of Groups,} Fourth edition, Springer-Verlag, New York.

\bibitem{7} {So\v{s}i\'{c}, M., (2009), \em Computation of constants in multiparametric quon algebras,} Ph.D. thesis, University of Zagreb, Croatia. 

\bibitem{8} {So\v{s}i\'{c}, M., \em Computation of constants in multiparametric quon algebras. A twisted group algebra approach}, in preparation.

\bibitem{9} {So\v{s}i\'{c}, M., (2012), \em Computing constants in some weight subspaces of free associative complex algebra}, Int. J. Pure. Appl. Math., Vol.81, No.1, 165 - 190. 


\bibitem{10} {So\v{s}i\'{c}, M., \em Some factorizations in the twisted group algebra of symmetric groups,} submitted for publication. 


\bibitem{11} {Stanton, D., (1986), \em Constructive Combinatorics,} UTM, Springer.

\end{thebibliography}
\end{document}